\newcommand\mbb{\mathbb}
\newcommand\mcal{\mathcal}
\newcommand\ol{\overline}
\newcommand\sV{\mcal{V}}
\newcommand\C{\mbb{C}}
\renewcommand\P{\mbb{P}}
\newcommand\Q{\mbb{Q}}
\newcommand\R{\mbb{R}}
\renewcommand\epsilon{\varepsilon}
\renewcommand\phi{\varphi}
\renewcommand\theta{\vartheta}
\theoremstyle{plain}
\newtheorem{Thm}{Theorem}
\newtheorem*{Thm*}{Theorem}
\newtheorem*{Prop*}{Proposition}
\newtheorem*{Cor*}{Corollary}
\newtheorem*{Lemma*}{Lemma}
\newtheorem*{Sublemma*}{Sublemma}
\newtheorem*{Conjecture*}{Conjecture}
\theoremstyle{definition}
\newtheorem{Example}[Thm]{Example}
\newtheorem{Remark}[Thm]{Remark}
\newtheorem*{Constr*}{Construction}
\newtheorem*{Def*}{Definition}
\newtheorem*{Defs*}{Definitions}
\newtheorem*{Example*}{Example}
\newtheorem*{Examples*}{Examples}
\newtheorem*{Exercise*}{Exercise}
\newtheorem*{LemmaDef*}{Lemma and Definition}
\newtheorem*{Notation*}{Notation}
\newtheorem*{Problem*}{Problem}
\newtheorem*{Question*}{Question}
\newtheorem*{Remark*}{Remark}
\newtheorem*{Remarks*}{Remarks}
\newtheorem*{Warning*}{Warning}
\newtheorem*{Text*}{}
\begin{document}

\title[Computing Hermitian Determinantal Representations]{Computing Hermitian Determinantal Representations of Hyperbolic
  Curves}

\begin{abstract}
Every real hyperbolic form in three variables can be realized as the determinant of a linear net of Hermitian matrices containing a positive definite matrix. Such representations are an algebraic certificate for the hyperbolicity of the polynomial and their existence has been proved in several different ways. However, the resulting algorithms for computing determinantal representations are computationally intensive. In this note, we present an algorithm that reduces a large part of the problem to linear algebra and discuss its numerical implementation.
\end{abstract}

\author{Daniel Plaumann}
\address{Universit\"at Konstanz, Germany} 
\email{Daniel.Plaumann@uni-konstanz.de}
\author{Rainer Sinn}
\address{Georgia Institute of Technology, Atlanta, GA, USA} 
\email{rsinn3@math.gatech.edu}
\author{David E Speyer}
\address{University of Michigan, Ann Arbor, MI, USA} 
\email{speyer@umich.edu}
\author{Cynthia Vinzant}
\address{North Carolina State University, Raleigh, NC, USA} 
\email{clvinzan@ncsu.edu}
\maketitle

\section*{Introduction}

Let $f$ be a real homogeneous polynomial of degree $d$ in 
variables $x,y,z$. A \textbf{Hermitian determinantal
  representation} of $f$ is an expression
\begin{equation}\label{eq:intro}
 f \;\;=\;\;\det(xM_1+yM_2+zM_3),
\end{equation}
where $M_1,M_2,M_3$ are Hermitian $d\times d$ matrices. The representation is \textbf{definite} if
there is a point $e\in\R^{3}$ for which the matrix $e_1M_1+e_2M_2+e_3M_3$ is positive
definite.

This imposes an immediate condition on the projective curve $\sV_\C(f)$.
Because the eigenvalues of a Hermitian matrix are real, 
every real line passing through $e$ meets
this hypersurface in only real points.  A~polynomial with this
property is called \textbf{hyperbolic} (with respect to $e$). 
Hyperbolicity is reflected in the topology of the real points $\sV_\R(f)$.
When the curve $\sV_{\C}(f)$ is smooth, $f$ is hyperbolic if and only if $\sV_\R(f)$ 
consists of $\lfloor\frac{d}{2}\rfloor$ nested
ovals, and a pseudo-line if $d$ is odd. 

\begin{figure}[h]
 \includegraphics[width=3.8cm]{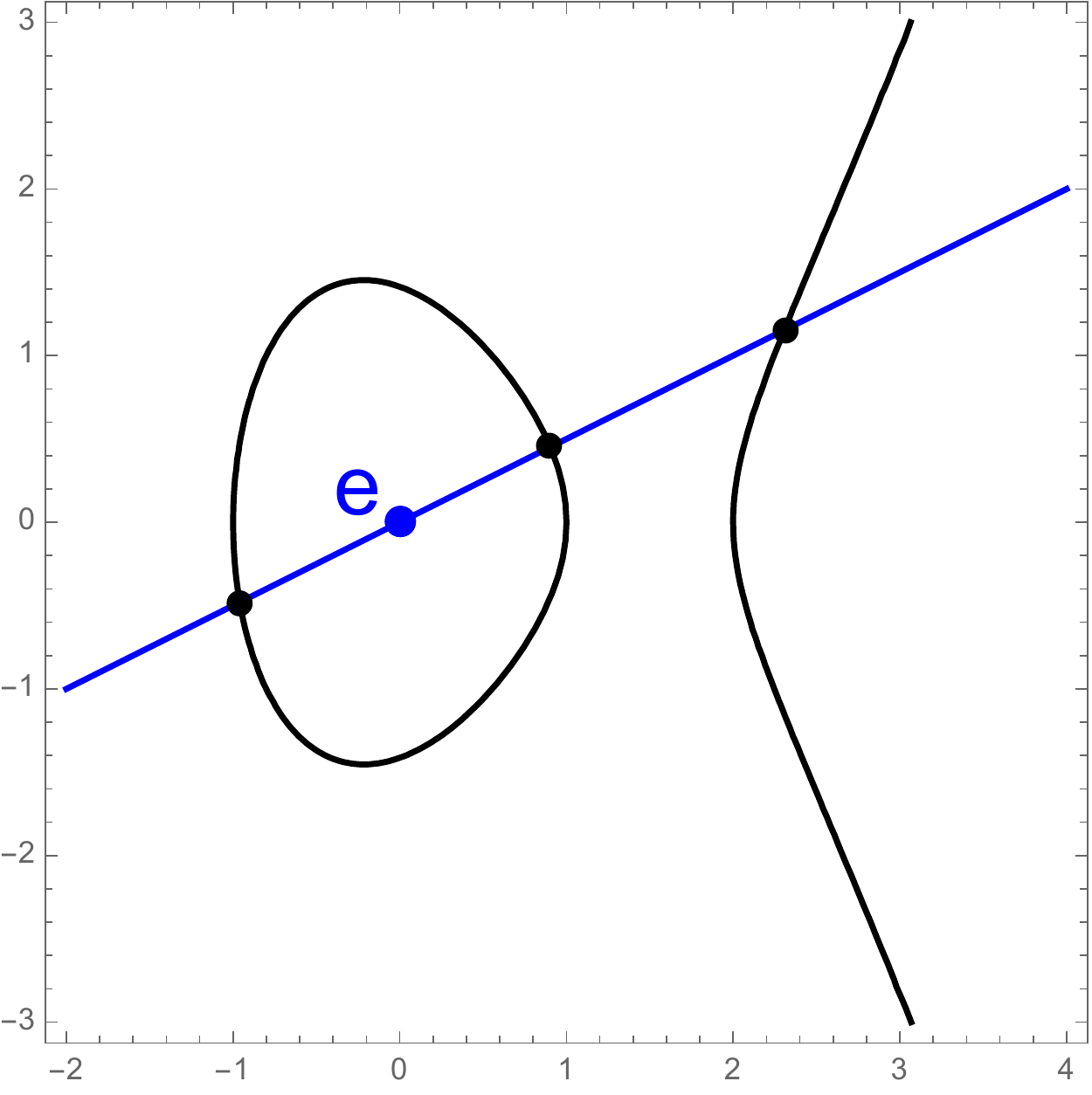} \quad \quad \quad
  \includegraphics[width=4.0cm]{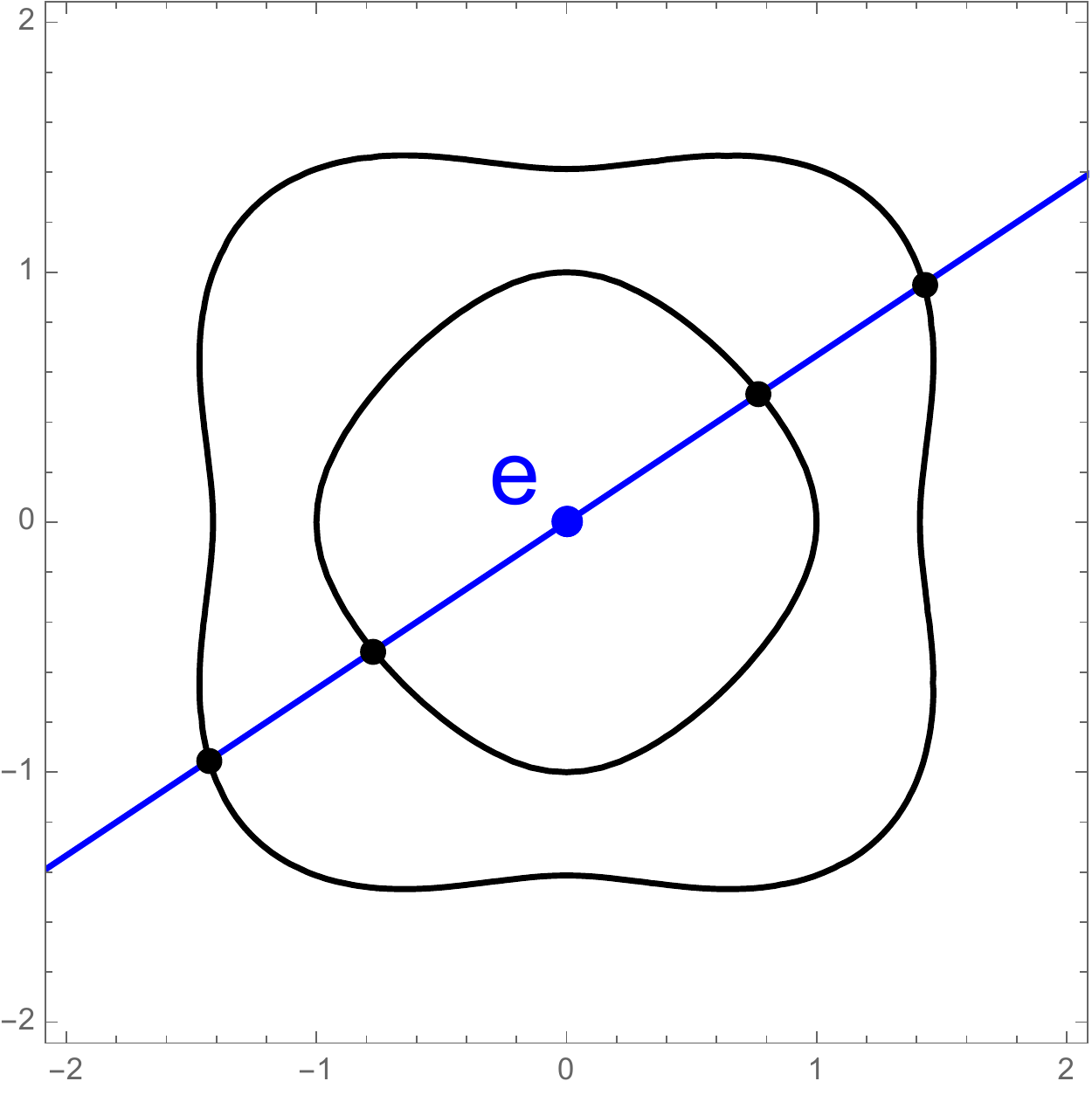}
\caption{Cubic and quartic hyperbolic curves in $\P^2(\R)$.  }
\label{fig:cubic}
\end{figure}

The Helton-Vinnikov theorem \cite{MR2292953}
(previously known as the Lax conjecture) 
says that every
hyperbolic polynomial in three variables possesses a definite determinantal
representation \eqref{eq:intro} with real symmetric matrices. 
Thus given a hyperbolic plane curve, one can investigate the problem of 
computing a definite determinantal representation. 

Computing symmetric determinantal representations 
of hyperbolic plane curves both symbolically and numerically 
was investigated by Sturmfels and two of the current authors
in  \cite{MR2962788} and in the case of quartic curves in \cite{MR2781949}.
Recently, it was discovered \cite{MR3066450} that looking for Hermitian matrices 
$M_1, M_2, M_3$, rather than 
real symmetric matrices, greatly simplifies this computational problem and the
proof of Helton and Vinnikov's theorem.

The goal of this paper is to present an algorithm for computing 
determinantal representations \eqref{eq:intro}, examine this 
algorithm numerically, and to compare it with existing methods. 
This construction is based heavily on  \cite{MR3066450}, which 
generalizes a classical construction due to Dixon \cite{Dixon02}. 

\emph{Acknowledgements.} We would like to thank Arno Fehm for helpful discussions concerning Hilbert's Irreducibility Theorem.
Cynthia Vinzant was partially supported by an NSF grant DMS-1204447.

\section{The Algorithm}

The input to our algorithm is a polynomial $f\in \R[x,y,z]$ of degree $d$
with smooth complex variety $\sV_{\C}(f)$ and a point $e=(e_1,e_2,e_3)\in \R^3$ with respect to which
$f$ is hyperbolic.  We will intersect the curve $\sV_{\C}(f)$ with the degree $(d-1)$ curve given by 
the directional derivative
\begin{equation}\label{eq:der}
 g(x,y,z) \;\;= \;\; e_1\frac{\partial f}{\partial x}+e_2\frac{\partial f}{\partial y}+e_3\frac{\partial f}{\partial z}. 
 \end{equation}
We assume that the intersection $\sV_{\C}(f)\cap \sV_{\C}(g)$ in $\P^{2}$ is transverse.
That is, the two curves $\sV_{\C}(f)$ and $\sV_{\C}(g)$ intersect in $d(d-1)$ distinct points. 
In fact, this implies none of these intersection points are real \cite[Lemma 2.4]{MR3066450}. 
If the intersection $\sV_{\C}(f)\cap \sV_{\C}(g)$ is not transverse, we may replace $g$ with the 
directional derivative of $f$ in direction $e'$, taking $e'$ to be a generic point sufficiently close to $e$. 

The output of the algorithm will be three Hermitian $d\times d$ matrices $M_1, M_2, M_3$ 
such that $f = c\cdot \det(xM_1+yM_2+zM_3)$ where $c\in \R_{>0}$ and $e_1M_1+e_2M_2+e_3M_3$ is positive definite. 
Furthermore, $g$ will be one of the diagonal minors of the resulting matrix $M = xM_1+yM_2+zM_3$, 
namely the minor of $M$ obtained by removing the first row and first column from $M$.

The construction below is based on the idea that if the Hermitian matrix $M$
is a determinantal representation of $f = \det(M)$, then its adjugate matrix $M^{\rm adj}$ 
satisfies 
\[  M^{\rm adj}\cdot M \;\;=\;\; \det(M)\cdot I \;\;=\;\ f\cdot I.\]
Let $a$ denote the top row of $M^{\rm adj}$. Then, taking the top row of this matrix equation,
we obtain the relation $a (x M_1 + y M_2 + z M_3) = (f,0,\ldots,0)$. Similar arguments give 
$(x M_1 + y M_2 + z M_3) \ol{a}^T =    (f,0,\ldots,0)^T$. We introduce a suitable 
vector $a = (a_{11}, a_{12}, \hdots, a_{1d})$  and 
solve these linear equations in the entries of the 
$M_i$. This finds $(M_1, M_2, M_3)$ without ever explicitly computing $M^{\rm adj}$. 

The algorithm proceeds as follows:\\

\begin{enumerate}
\item[(A1)] Compute the $d(d-1)$ points $\sV_{\C}(f)\cap \sV_{\C}(g)$. \smallskip
\item[(A2)]  Split the points into two disjoint, conjugate sets $\sV_{\C}(f,g) = S \cup \overline{S}$.  \smallskip
\item[(A3)]  Let $a_{11}$ equal $g$. \smallskip
\item[(A4)]  Extend $a_{11}$ to a basis $a =(a_{11},\ldots,a_{1d})$ of
  the vector space of polynomials in $\C[x,y,z]_{d-1}$ that vanish on
  the points $S$. (In particular, this space is always of dimension
  $d$; see Remark \ref{Remark:NondegenerateIntersection} below.)\smallskip
\item[(A5)]  In the $3 d^2$ variables $(M_1)_{i,j}, (M_2)_{i,j}, (M_3)_{i,j}$, solve the 
$2d\binom{d+2}{2} = (d+2)(d+1)d$ affine linear equations coming from the polynomial vector equations
\begin{eqnarray*}
a \;(xM_1+yM_2+zM_3) &= \;\;(f,0\ldots 0)\;\;\;\\
(xM_1+yM_2+zM_3)\;\ol{a}^T &= \;\;(f,0 \ldots 0)^T. \notag
\end{eqnarray*}  
\item[(A6)] Output the \textit{unique} solution $M_1, M_2, M_3$. \bigskip
\end{enumerate}
We need to argue that such a solution $M = x M_1+ yM_2+zM_3$ exists, is unique, and has the 
desired properties, which we do below.  Numerical implementation of this algorithm and surrounding 
computational issues will be discussed in Section~\ref{sec:Implementation}.

\begin{Thm}\label{thm:Alg}
Let $f\in \R[x,y,z]$ be hyperbolic with respect to a point $e\in \R^3$ with $f(e)>0$. 
Suppose that $\sV_{\C}(f)$ is smooth and that all the intersection points of $\sV_{\C}(f)$ and
$\sV_{\C}(g)$ are transverse, where 
$g=e_1\frac{\partial f}{\partial x}+e_2\frac{\partial f}{\partial y}+e_3\frac{\partial f}{\partial z}$.  Then the system of equations 
in {\rm (A5)} has a unique solution $M_1, M_2, M_3$, which are Hermitian matrices satisfying
\[ f \; = \;c\cdot\det(xM_1+yM_2+zM_3) \;\;\;\; \text{ and } \;\;\;\; e_1M_1+e_2M_2+e_3M_3 \succ 0,\]
where $c\in \R_{>0}$.
\end{Thm}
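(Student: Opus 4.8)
The plan is to reverse-engineer the construction: assume first that a definite Hermitian determinantal representation $M^{0} = xM_1^{0}+yM_2^{0}+zM_3^{0}$ of $f$ exists (which is guaranteed by the Helton–Vinnikov theorem, or by the construction in \cite{MR3066450}) and show that, after a suitable normalization, its data satisfies the equations in (A5); then show conversely that \emph{every} solution of (A5) arises this way, so that the solution space is nonempty and a single point. Concretely, write $N = (M^{0})^{\mathrm{adj}}$ for the adjugate, so $N M^{0} = M^{0} N = f\cdot I$, and let $b$ be the top row of $N$. The entries of $b$ are degree-$(d-1)$ polynomials, and from $N M^{0} = fI$ one reads off that $b$ satisfies the first vector equation of (A5) with $f$ on the right, while the first column of $N$ equals $\ol{b}^{T}$ (since $M^{0}$ is Hermitian and $\det M^{0}=f$ is real) and satisfies the second. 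The crucial geometric input is that the entries $b_1,\dots,b_d$ span exactly the space of degree-$(d-1)$ forms vanishing on one of the two conjugate halves $S$ of $\sV_\C(f)\cap\sV_\C(g)$: this is precisely the content of the Dixon-type analysis in \cite{MR3066450}, using transversality so that $\sV_\C(f,g)$ consists of $d(d-1)$ reduced points, none real, and using that $b_1 = g$ is (up to scalar) the first diagonal minor of $M^{0}$, which the construction arranges to be $g$ itself. Rescaling the representation by a positive real constant and conjugating $M^{0}$ by a unitary so that the top row of the adjugate is the chosen basis $a = (a_{11},\dots,a_{1d})$ with $a_{11}=g$, we obtain a solution of (A5).

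For existence it then remains to check that the linear conditions imposed on $a$ in (A4) are consistent with being the top row of an adjugate, i.e. that the Wronski-type / residue conditions that distinguish "top rows of adjugates of determinantal representations" from "arbitrary $d$-tuples of forms vanishing on $S$" are automatically satisfied; here I would invoke the count in Remark~\ref{Remark:NondegenerateIntersection} that $\dim \C[x,y,z]_{d-1}(S) = d$ together with the fact that $\sV_\C(g)$ passes through $S$, so that $g$ lies in this space and any basis extension is as general as the adjugate row of the Helton–Vinnikov representation up to the action of $\GL_d(\C)$ on representations. Since two determinantal representations of the same smooth $f$ differ by $M \mapsto P M Q$ with $\det(P)\det(Q)$ constant (equivalently, by the transitive action on the line bundle / theta-characteristic data), fixing $a$ rigidifies this ambiguity completely, which is what forces \emph{uniqueness}: if $M, M'$ both solve (A5), then $a$ is the top row of both $M^{\mathrm{adj}}$ and $(M')^{\mathrm{adj}}$, the shared kernel data on $\sV_\C(f)$ pins down the eigenvector line bundle, and smoothness of $\sV_\C(f)$ upgrades this to $M = M'$ via the standard correspondence between determinantal representations and line bundles on the curve.

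Finally, definiteness: once we know the unique solution $M$ is a bona fide Hermitian determinantal representation with $f = c\det M$, hyperbolicity of $f$ with respect to $e$ and $f(e)>0$ force $e_1M_1+e_2M_2+e_3M_3$ to be \emph{definite} — its eigenvalues are the roots (in $t$) of $\det(M(e) - t\,\mathrm{something})$... more precisely, along the pencil through $e$ the real-rootedness plus the sign $f(e)>0$ rule out indefiniteness, and connectedness of the hyperbolicity cone shows the signature is constant and equal to that at a definite representative; replacing $M$ by $-M$ if necessary (which changes $c$ by $(-1)^d$, harmless after also adjusting the sign of $a$) we get $M(e)\succ 0$ and $c>0$. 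I would phrase this last part by comparing $M$ directly with the Helton–Vinnikov representation $M^{0}$: both represent $cf$ for the \emph{same} $c>0$ after normalization, both have the same first minor $g$, and by uniqueness $M = M^{0}$, so definiteness is inherited.

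The main obstacle I expect is the uniqueness argument — specifically, proving that the purely linear system (A5), which a priori could have a positive-dimensional solution space, cuts down to a single point. The leverage is entirely geometric: the top-row vector $a$, once its entries are prescribed as a basis of $\C[x,y,z]_{d-1}(S)$, determines the kernel vector of $M$ at each point of $\sV_\C(f)$ (the common zero of $\det M$ is where $a$ becomes a left null vector), hence determines the line bundle $\sL$ on the smooth curve $\sV_\C(f)$ up to isomorphism, and a determinantal representation is recovered from $(\sV_\C(f),\sL)$ uniquely up to $\GL_d$-conjugacy — but the conjugacy is already killed by fixing $a$. Making this chain of implications precise, and in particular verifying that the \emph{linear} equations (A5) already encode the full geometric rigidity (rather than just a necessary shadow of it), is the delicate point; everything else is either bookkeeping or a direct citation of \cite{MR3066450}.
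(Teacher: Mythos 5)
Your existence argument is essentially the paper's: both rest on Construction~4.5 and Theorem~4.6 of \cite{MR3066450}, which produce a definite Hermitian representation whose adjugate has first row equal to the chosen basis $a$. Note that the paper cites this directly, so your extra step of ``conjugating $M^{0}$ by a unitary so that the top row of the adjugate is $a$'' is both unnecessary and not quite right as stated: a unitary congruence does not in general move the adjugate's top row onto a prescribed basis; you would need a general congruence $M\mapsto P^*MP$ with $P$ built from the change of basis, and then re-verify definiteness and the constants. The real problem is uniqueness, which is the substance of the theorem and which you yourself flag as ``the delicate point'': your route through the eigenvector line bundle and the $M\mapsto PMQ$ equivalence asserts, but does not prove, that prescribing the first row of the adjugate kills all remaining ambiguity, and it silently assumes that \emph{every} solution of the linear system (A5) is a genuine determinantal representation of $f$. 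Neither is automatic. In particular, you never exclude solutions $M$ of (A5) with $\det M\equiv 0$, for which there is no adjugate or kernel data at all and the line-bundle argument cannot even begin.

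The paper closes these gaps with an elementary argument that you could substitute for the geometric one. If (A5) had two distinct solutions, taking affine combinations $\lambda M+(1-\lambda)N$ with the known good solution shows one may assume both determinants are not identically zero. Since $a\,M=(f,0,\dots,0)$ and $a$ is generically nonzero on $\sV_\C(f)$, the matrix $M$ drops rank along the curve, so $f\mid\det M$; smoothness makes $f$ irreducible, hence $\det M=\alpha f$, and then the first row and column of $M^{\rm adj}$ are $\alpha a$ and $\alpha\,\ol{a}^T$. Two matrices of rank one along $\sV_\C(f)$ with the same (generically nonzero) first row and column agree on the curve, and since their entries have degree $d-1<d=\deg f$, the normalized adjugates coincide as polynomials; hence $M$ and $N$ are proportional, and the inhomogeneity of (A5) forces $M=N$. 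Finally, the sign: your suggestion to replace $M$ by $-M$ ``after also adjusting the sign of $a$'' is not available, because $a_{11}=g$ and the right-hand side $f$ are fixed by the algorithm; this rigidity is exactly what the paper exploits, evaluating at $e$ and using $a_{11}(e)=g(e)>0$ together with definiteness of $M'(e)$ to conclude that the normalized adjugate is positive definite at $e$, whence $c>0$ and $M(e)\succ 0$. So the positivity claims must be derived by this constant chase (or by comparison with a correctly normalized $M^{0}$, but that normalization is precisely the computation your sketch skips).
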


\begin{proof}
(\textit{Existence.}) First, let us show that the affine linear equations (A5) have some solution $M_1, M_2, M_3$. 
By Construction~4.5 and Theorem~4.6 of \cite{MR3066450}, 
there exists a Hermitian linear matrix  $M'= xM'_1+yM'_2+zM'_3$ such that
for some $c\neq 0$, the determinant $\det(M')$ equals $c^{d-1}f$,  
the matrix $M'(e) = e_1M'_1+e_2M'_2+e_3M'_3$ is either positive or negative definite, 
and the first row of the adjugate matrix $A=(1/c^{d-2})(M')^{\rm adj}$ 
is precisely $a =(a_{11}, a_{12}, \hdots, a_{1d})$.  
Since the matrices $M'$ and $(M')^{\rm adj}$ are Hermitian, it follows that the 
first column of $(1/c^{d-2})(M')^{\rm adj}$ is $\ol{a}^T = (\ol{a_{11}}, \ol{a_{12}}, \hdots, \ol{a_{1d}})^T$.

In fact, the constant $c$ must be positive. We can see this from examining our matrices at the point $e$. 
Since $M'(e)$ is definite, both $(M')^{\rm adj}(e)$ and $A(e)$ must be definite as well. 
Furthermore, because the $(1,1)$ entry of $A(e)$, namely $a_{11}(e)=g(e)$, is positive
we see that the matrix $A(e)$ is positive definite.  Then the equation
\[\det(A) \;=\; (1/c^{d-2})^d \det((M')^{\rm adj}) = (1/c^{d-2})^d \cdot (c^{d-1} f)^{d-1}\;=c\cdot f^{d-1}\]
evaluated in the point $e$ shows that $c$ is positive. 
To find a solution to the equations (A5), let $M=(1/c)M'$. Then \[\det(M)\;=\;(1/c)^d\det(M') \;=\; (1/c)f.\]
Furthermore $M^{\rm adj}$ equals $(1/c)^{d-1} (M')^{\rm adj}$, which is $(1/c)\cdot A$.
We know that  both $M^{\rm adj}\cdot M$ and $M\cdot M^{\rm adj}$ equal $\det(M)I$.
Dividing these identities by $(1/c)$ we see that 
$A\cdot M = f\cdot I $ and $M\cdot A =   f\cdot I.$
 From taking the first row of the first equation and the first 
column of the second equation, we see that $M$ 
satisfies the equations  
\[a \;M =(f,0\ldots 0) \;\;\;\; \text{ and } \;\;\;\; M\;\ol{a}^T = (f,0 \ldots 0)^T. \]
Since $c\cdot f = \det(M)$ and $M(e)$ is positive definite, in order to finish 
the proof, it suffices to show that this is the unique solution to these equations. \\

(\textit{Uniqueness}.) 
First we argue that if the system of equations (A5) has two distinct solutions, then 
it has two distinct solutions $M=xM_1+yM_2+zM_3$ and $N=xN_1+yN_2+zN_3$ for which both 
$\det(M)$ and $\det(N)$ are not identically zero. 

By the proof above, there is a solution $M$ to the equations (A5)
with $\det(M) \neq 0$.   If there is another solution $N$, then for any $\lambda\in \R$, the matrix 
$\lambda M + (1-\lambda)N$ is also a solution.  The set of  
 $\lambda\in \R$ for which the determinant $\det(\lambda M + (1-\lambda)N) $ is identically zero 
 is closed and does not contain $\lambda =1$. Thus for almost 
 all choices of  $\lambda \in \R$, $\det(\lambda M + (1-\lambda)N) $ is nonzero. 

Now suppose $M=xM_1+yM_2+zM_3$ and $N=xN_1+yN_2+zN_3$ are matrices satisfying the
equations (A5) for which $\det(M)$ and $\det(N)$ are not identically zero. 
We see that at a general point point $(x,y,z)$ in $\sV_{\C}(f)$, the vector $a$ is non-zero, and 
the matrix $M$ does not have full rank. Since $\det(M)$ has degree $d$ and $f$ is irreducible, 
we can conclude that $\det(M)=\alpha f$ for some constant $\alpha\neq 0$. Similarly $\det(N)=\beta f$ for
some $\beta\neq 0$. 

Again we use the identity $(M)^{\rm adj}\cdot M =  M \cdot (M)^{\rm adj}= \det(M)I = \alpha f I$.
Therefore $(M^{\rm adj})_1 \cdot M = \alpha (f,0\ldots0) = \alpha (a \cdot M)$, where $(M^{\rm adj})_1$ is the first row of $M^{\rm adj}$.
For generic $(x,y,z)$ in $\C^3$, the matrix $M$ is invertible, which implies that
 the first row of $M^{\rm adj}$ is $\alpha a$.
Similarly, the first column of $M^{\rm adj}$ is $\alpha \ol{a}^T$, and 
the first row and column of $N^{\rm adj}$ are $\beta a$ and $\beta \ol{a}^T$, respectively. 
 
The matrices $(1/\alpha)M^{\rm adj}$ and $(1/\beta)N^{\rm adj}$ therefore have the same 
first row and column and both have rank-one along the curve $\sV_{\C}(f)$. 
For generic $(x,y,z)\in \sV_{\C}(f)$, the entries of these rows and columns are non-zero. 
Therefore the difference $(1/\alpha)M^{\rm adj}- (1/\beta)N^{\rm adj}$ vanishes on $\sV_{\C}(f)$
and all of the entries of this difference must be divisible by $f$. 
However, the entries of these matrices have degree $d-1$ whereas $f$ has degree $d$, so we see that 
$(1/\alpha)M^{\rm adj}$ must equal $(1/\beta)N^{\rm adj}$.  In particular, $M^{\rm adj}$ is a constant multiple of 
$N^{\rm adj}$.  It follows that $M$ is a constant multiple of $N$. 
Because our affine linear equations (A5) are not homogenous, we see that in fact $M=N$ and our solution is unique. 
\end{proof}

\begin{Remark} In fact, in our algorithm we can replace $g$ in \eqref{eq:der} 
by any polynomial $g\in \R[x,y,z]_{d-1}$ where $g(e)>0$ and $g$ \textbf{interlaces $f$
with respect to $e$}.  By this, we mean that for every point $p\in \R^3$ the roots 
of the univariate polynomial $g(te+p)$ interlace those of $f(te+p)$.
\end{Remark}

\begin{Remark} 
When the curve $\sV(f)$ is singular, some of the above construction goes through, but 
requires more care. Because $f$ has a definite determinantal representation $f=\det(M)$, 
e.g. \cite[Cor. 4.9]{MR3066450}, 
we know that there exists an interlacer $g = (M^{\rm adj})_{11}$ 
and a basis of polynomials $a = ( (M^{\rm adj})_{11}, \hdots, (M^{\rm adj})_{1d})$ for which 
the equations (A5) have a solution. Finding such polynomials remains a challenge. When $f$ is singular, 
the intersection $\sV(f)\cap \sV(g)$ contains points with multiplicities. It is unclear exactly 
how to split these points with multiplicity and find the correct linear space of polynomials 
$a_{11}, \hdots, a_{1d}$ that vanish on them. One example of this procedure is given below. 
\end{Remark}

\begin{Example} \label{ex:quartic}
To illustrate our algorithm, we apply it to the quartic 
\begin{equation}\label{eq:exQuartic}
f(x,y,z) \;\;=\;\; x^4 - 4x^2y^2 + y^4 - 4x^2z^2 - 2y^2z^2 + z^4,
\end{equation}
which is hyperbolic with respect to the point $e=[1:0:0]$, and appears as Example~4.12 in \cite{MR3066450}. 
This curve has two nodes, $[0:1:1]$ and $[0:-1:1]$,
but done carefully, our algorithm still works. 
Figure~\ref{fig:ex} shows the real curves $\sV_{\R}(f)$ and $\sV_{\R}(g)$, where $g = \frac{\partial f}{\partial x}$, in different affine planes.

 \begin{figure}[h] 
 \includegraphics[width=1.8in]{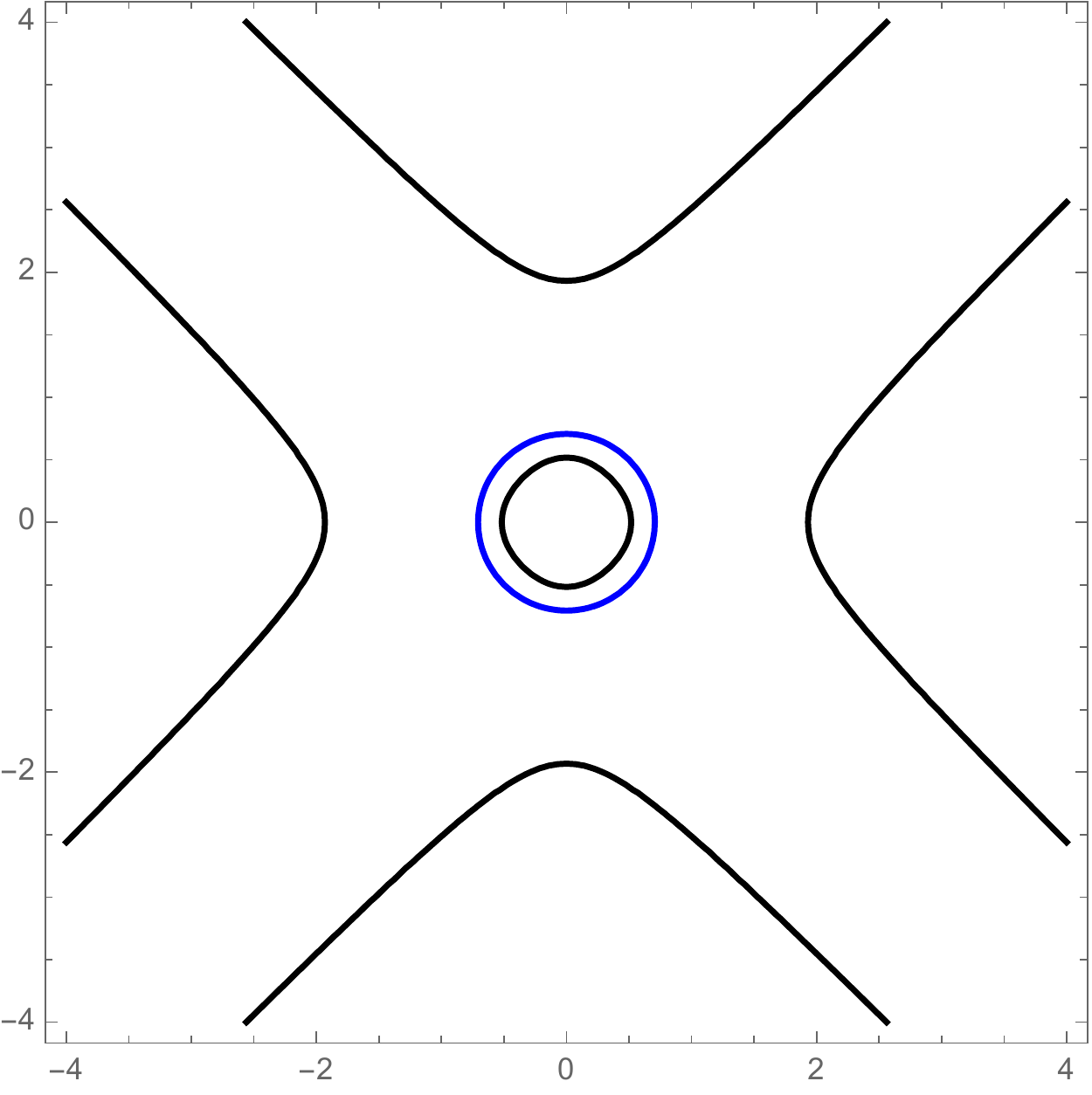} \quad \quad \quad
  \includegraphics[width=1.8in]{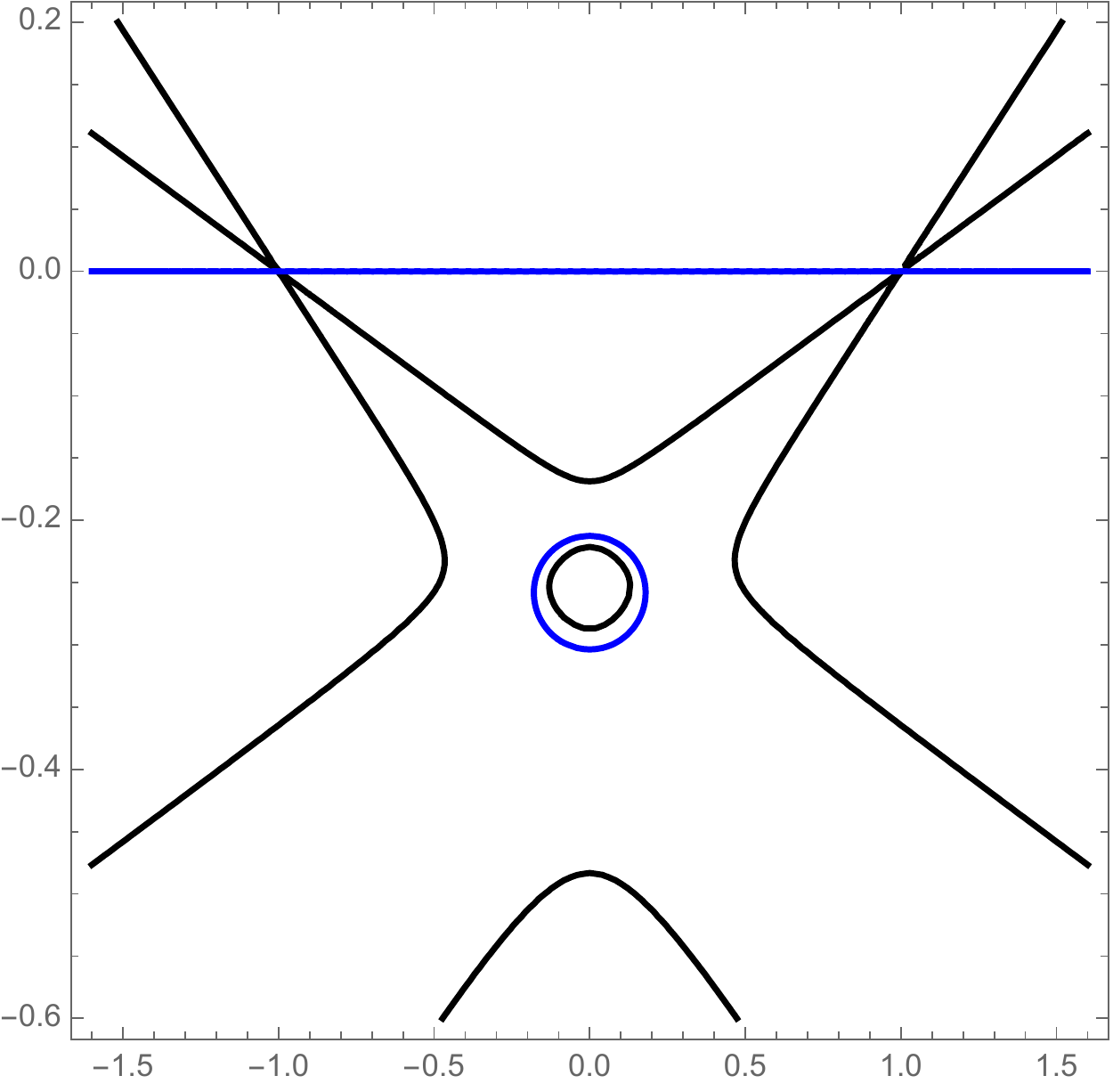} 
\caption{The hyperbolic quartic \eqref{eq:exQuartic} and its directional derivative. }\label{fig:ex}
\end{figure}

First we define $a_{11}$ to be the directional derivative 
$\frac{1}{4} D_ef = x^3 - 2 x y^2 - 2 x z^2$. 
The intersection of $f$ and $a_{11}$ consists of the eight points 
$[2:\pm\sqrt{3}:\pm i]$, $[2:\pm i:\pm\sqrt{3}]$ and the two nodes, $[0:\pm1:1]$, 
each with multiplicity 2.  We divide 
these points into two conjugate sets $S\cup\ol{S}$ where
\[ S  = \left\{ [0:1:1], \;[0:-1:1], \;[2:\sqrt{3}:i],\;[2:-\sqrt{3}:i],\;[2:i:\sqrt{3}],\;[2:i:-\sqrt{3}]\right\}.\]
The vector space of cubics in $\C[x,y,z]$ vanishing on these six points 
is four dimensional and we extend $a_{11}$ to a basis 
$\{a_{11},a_{12},a_{13}, a_{14}\}$ for this space, where\
\begin{align*}
a_{12} &= i x^3 + 4 i x y^2 - 4 x^2 z - 4 y^2 z + 4 z^3,\\ 
a_{13} &= -3 i x^3 + 4 x^2 y +  4 i x y^2 - 4 y^3 + 4 y z^2,\\
a_{14} &=-x^3 - 2 i x^2 y - 2 i x^2 z +  4 x y z.
\end{align*}
Let $M=xA+yB+zC$. The two $4\times 4$ polynomial matrix equations 
$aM = (f,0)$ and $M\ol{a}^T = (f,0)^T$  give us 120 affine linear equations 
in the 48 variables $A_{ij}, B_{ij}, C_{ij}$. 
For example, the first entry of the vector $aM$ is 
\begin{small}
\begin{align*}
&(A_{11} + i A_{21} - 3 i A_{31} - A_{41}) x^4 
+ (4 A_{31} - 2 i A_{41} + B_{11} + i B_{21} - 3 i B_{31} -  B_{41} ) x^3 y \\
&+ (-2 A_{11} + 4 i A_{21} + 4 i A_{31} + 4 B_{31} -  2 i B_{41} ) x^2 y^2 
+ (-4 A_{31} - 2 B_{11} + 4 i B_{21} + 4 i B_{31} ) x y^3 \\
&-  4 B_{31} y^4 + (-4 A_{21} - 2 i A_{41} + C_{11} + i C_{21} - 3 i C_{31} -   C_{41} ) x^3 z \\
&+ (4 A_{41} - 4 B_{21} - 2 i B_{41} + 4 C_{31} -2 i C_{41} ) x^2 y z 
+ (-4 A_{21} + 4 B_{41} - 2 C_{11} + 4 i C_{21} 
+   4 i C_{31}) x y^2 z \\
&+ (-4 B_{21} - 4 C_{31} ) y^3 z + (-2 A_{11} - 4 C_{21} -   2 i C_{41}) x^2 z^2 
+ (4 A_{31} - 2 B_{11} + 4 C_{41}) x y z^2 \\
&+ (4 B_{31} -  4 C_{21} ) y^2 z^2 + (4 A_{21} - 2 C_{11}) x z^3 
+ (4 B_{21} + 4 C_{31}) y z^3 +   4 C_{21} z^4.
\end{align*}
\end{small}

Identifying this polynomial with $f$ gives us 15 affine linear equations. For example, 
from the monomial $x^4$, we see that $A_{11} + i A_{21} - 3 i A_{31} - A_{41}=1$.
Similarly, from each of the other entries of $aM-(f,0)$ and $M\ol{a}^T - (f,0)^T$
we get 15 affine linear equations in the $3\cdot 4^2$ variables  $A_{ij}, B_{ij}, C_{ij}$, 
for a total of $2\cdot4\cdot 15 = 120$. 
The unique solution to these 120 equations gives the Hermitian matrix representation 
\[ xA+yB+zC  \;\;=\;\; \frac{1}{8}
 \begin{pmatrix} 
14 x& 2 z& 2 i x - 2 y& 2 i (y - z)\\ 
2 z& x&   0& -i x + 2 y\\ -2 i x - 2 y& 0& x& i x - 2 z\\ 
-2 i (y - z)&   i x + 2 y& -i x - 2 z& 4 x\end{pmatrix},
 \]
whose determinant is $(1/256)\cdot f$. 
\end{Example}

Ideally we would like to carry out our algorithm symbolically, as in this example, but 
the required field extensions generally will be too large. Given a hyperbolic polynomial 
$f \in \Q[x,y,z]_d$, one can ask: What is the field extension necessary to carry out 
the above construction symbolically?

In fact, after computing the points $\sV(f,g)$ and splitting them as $S\cup \ol{S}$, all of the 
remaining steps in the algorithm only require linear algebra, and thus can be done with rational arithmetic. 
Unfortunately, for generic $g\in \R[x,y,z]_{d-1}$ the Galois group of the intersection 
$\sV(f,g)$ is the full symmetric group of size $d(d-1)$.   It seems very hard to 
say anything about the smallest field extension of the points $\sV(f, g)$ for 
special choices of the interlacing polynomial $g$.

\begin{Remark}
Since the primary computational task is to find the coordinates of the points in $\sV(f,g)$, it would be worth investigating whether there is a way to find the polynomials $a_{1j}$ without computing the points $S$ explicitly. 
The analogous $1$-dimensional problem is, given a positive real polynomial $f(t)$ of degree $2d$, to write $f(t)$ as $g(t) \overline{g}(t)$ for some degree $d$ polynomial $g\in \C[t]_{\leq d}$. This task is known as 
``polynomial spectral factorization'', see, for example, \cite{SpecFactorization}.
Both problems involve finding a basis of polynomials (in either $\C[t]_{\leq d}$ or $\C[x,y,z]_{d-1}/(f)$) that vanish
on a set of non-real points, given a real polynomial that vanishes on both the set and its complex conjugate. 
This higher dimensional analogue does not seem to have been considered, but strikes us as worthy of study.
\end{Remark}

\section{Numerical Implementation}\label{sec:Implementation}
Here we discuss our numerical implementation of this algorithm in {\tt Mathematica} 
which is available as supplementary material on the website 
\url{http://people.math.gatech.edu/~rsinn3/numdetrep.html}. 
Below we give computation times and an error measure for $100$ random tests for polynomials of degrees $3$-$15$. 
Overall, this method results in very fast computations compared to other available methods, although the accuracy becomes poor 
for large $d$. The numerical accuracy of the approximation of the common zeros of the hyperbolic polynomial $f$ and the interlacing 
polynomial $g$ (step (A1)) strongly affects the final error in the numerical determinantal representation. The following steps (A4) and (A5) in our algorithm 
are all linear algebra problems that can be solved accurately.

One issue with numerical computations is that the affine linear equations 
in (A5) are overdetermined -- there are $d^3+3d^2+2d$ equations in $3d^2$ variables. 
With small numerical errors, these equations no longer have a solution. 
In our implementation of the algorithm, we solve this by the standard method of taking a least squares solution to the system. 

The step that takes the most significant amount of computation time by far is computing the points $\sV(f,g)$. The following steps in our algorithm
\begin{itemize}
\item Computing the basis $(a_{11}, \hdots, a_{1d})$. 
\item Translating the polynomial equations (A5) into a system of linear equations. 
\item Solving the resulting least squares problem. 
\end{itemize}
take much less time in our implementation.

Overall this method finds (approximate) determinantal representations surprisingly fast. 
To test our code, we generated hyperbolic polynomials of degree $d$ by taking the determinant
of $x I+y(B+B^T)+z(C+C^T)$ where $B$ and $C$ are random $d\times d$ matrices whose entries are chosen randomly from a normal distribution with mean $1$ and standard deviation $0.5$. Any such determinant will be hyperbolic with respect to the point $[1:0:0]$. 
Averaging test times for 100 examples in each degree gave the computation times shown in Table~1.

\begin{center}
\begin{table}\label{table:compTimes}
\begin{center}
\begin{tabular}{c|c|c|c|c|c|c|c}
degree & {\bf 3} & {\bf 4} & {\bf 5} & {\bf 6} & {\bf 7} & {\bf 8} & {\bf 9} \\ \hline
time (sec) & 0.05 & 0.11 & 0.25 & 0.56 & 1.19 & 2.36 & 4.47 \\ \hline
time $\sV(f,g)$ & 0.04 & 0.09 & 0.20 & 0.45 & 0.96 & 1.89 & 3.58 \\ \hline
error & $5 \cdot 10^{-13}$ & $3 \cdot 10^{-12}$ & $2 \cdot 10^{-11}$ & $2 \cdot 10^{-10}$ & $4 \cdot 10^{-9}$ & $4 \cdot 10^{-8}$ & $1 \cdot 10^{-7}$ \\\hline
relative error & $2 \cdot 10^{-14}$ & $9 \cdot 10^{-14}$ & $2 \cdot 10^{-13}$ & $4 \cdot 10^{-13}$ & $2\cdot 10^{-12}$ & $1\cdot 10^{-11}$ & $9\cdot 10^{-12}$ \\ \hline
\end{tabular}

\bigskip

\begin{tabular}{c|c|c|c|c|c|c}
degree & {\bf 10} & {\bf 11} & {\bf 12} & {\bf 13} & {\bf 14} & {\bf 15} \\ \hline
time (sec) & 9.04 & 13.81 & 22.95 & 37.06 & 58.08 & 89.71 \\ \hline
time $\sV(f,g)$ & 6.49 & 11.20 & 18.72 & 30.37 & 47.84 & 74.31 \\ \hline
error & $2 \cdot 10^{-6}$ & $0.03$ & $6 \cdot 10^{-4}$ & $0.01$ & $5.89$ & $47827$ \\ \hline
relative error & $6\cdot 10^{-11}$ & $3 \cdot 10^{-7}$ & $5 \cdot 10^{-10}$ & $4 \cdot 10^{-9}$ & $5 \cdot 10^{-7}$ & $1 \cdot 10^{-4}$ \\ \hline
\end{tabular}
\bigskip
\end{center}
\caption{Average times and errors for computing determinantal representations.  }
\end{table}
\end{center}
\smallskip

Here ``error'' means the maximum over the absolute values of the
coefficients of the difference between the original polynomial $f$ and
the appropriately scaled determinant $c\cdot\det(M)$. 
We also found it useful to look at the
``relative error'', meaning the error divided by the
largest coefficient of $f$. This is a more reasonable measure for the accuracy in the coefficients of a hyperbolic polynomial because they tend to become very large in higher degree. Indeed, differences between the coefficients of a hyperbolic polynomial can grow exponentially with the degree, 
as in \cite[\S 4]{Speyer05}.

One additional source of numerical errors is the computation of the determinant of 
the output of our algorithm. Because of the size of this matrix, a symbolic computation of the determinant 
is infeasible and instead we compute it by interpolation on a random set of points on the unit circle. Then we use the interpolated polynomial to 
compute the errors in the coefficients. Numerical testing suggests that numerical error in the coefficients of the interpolated determinant are small.

For comparison, the only other known methods for computing definite determinantal 
representations are discussed in \cite{MR2962788}. Here, finding definite determinantal 
representations is already extremely time consuming for quintics ($d=5$) and 
practically infeasible for larger degrees ($d\geq 6$).
Thus the method described above provides a great improvement in computation ability. 

\begin{Remark}
As we use a random matrix of linear forms to generate our test polynomials, one might 
also think to compare the starting matrices to those in the output, 
rather then the coefficients of the determinants. The reason this cannot be done is 
that a given hyperbolic polynomial $f$ has an infinite collection of Hermitian determinantal representations 
$f = \det(M)$, even when considered up to the equivalence $M\sim U^* M U$ for 
$U\in GL(d,\C)$. There is no reason for the starting determinantal representation 
$xI +y(B+B^T) + z(C+C^T)$ and the output $xM_1+yM_2+zM_3$ to be equivalent, and so we
cannot meaningfully compare their entries. 
\end{Remark}

\begin{Remark}\label{Remark:NondegenerateIntersection}
  The algorithm presented here could also be used to construct a
  Hermitian determinantal representation of a non-hyperbolic
  polynomial $f$, under certain conditions.  If $f \in \R[x,y,z]_d$,
  then one would need a polynomial $g \in \R[x,y,z]_{d-1}$ such that
  the intersection of the two curves $\sV_\C(f) \cap \sV_\C(g)$ is
  transversal and contains no real points. Furthermore, the vanishing
  ideal of $S$ in step (A4) of our algorithm must have dimension $d$
  in degree $d-1$; this translates into a condition on the
  corresponding divisor on $\sV_\C(f)$ (see \cite[\S
  4.5]{Vinnikov12}). It is a non-trivial fact that this condition is
  always met in our original setup when $f$ is hyperbolic and $g$
  interlaces $f$. This follows from \cite[Thm.~5.4]{Vinnikov12} or the
  main result of \cite{MR3066450}; a fuller discussion of the
  non-degeneracy condition in terms of points on the Jacobian can be
  found in \cite{Vinnikov93}. In the non-hyperbolic case, the
  condition is at least met for a generic choice of $g\in
  \R[x,y,z]_{d-1}$. If $d$ is odd, then the reality condition can be
  obtained by picking $g$ to have no real points, for example by
  taking $g$ as a sum of squares. Otherwise it is not clear how to
  pick such a polynomial $g$.
\end{Remark}

{\small
\bibliography{lit}{}
\bibliographystyle{plain}
}

\end{document}